\definecolor{webgreen}{rgb}{0,.5,0}
\definecolor{webbrown}{rgb}{.6,0,0}
\DeclareMathOperator{\arctanh}{arctanh}
\begin{document}
	
	\theoremstyle{plain}
	\newtheorem{theorem}{Theorem}
	\newtheorem{corollary}[theorem]{Corollary}
	\newtheorem{lemma}{Lemma}
	\newtheorem{example}{Examples}
	\newtheorem*{remark}{Remark}
	
	\begin{center}
		\vskip 1cm
		{\LARGE\bf
			On some Series involving Reciprocals of $\binom{2n}{n}$ and the Catalan's Constant $G$}
		
		\vskip 1cm
		
		{\large
			Olofin Akerele$^1$ \\
		Department of Mathematics, \\University of Ibadan, Ibadan, Nigeria \\
		\href{mailto:akereleolofin@gmail.com}{\tt akereleolofin@gmail.com}

			\vskip 0.2 in
			
			Quadri Adeshina$^2$ \\
			Department of Mathematics, \\University of Ibadan, Ibadan, Nigeria \\
			\href{mailto:adeshinaquadri69@gmail.com}{\tt adeshinaquadri69@gmail.com}

		}
		
	\end{center}
	
	\vskip .2 in
	
	\begin{abstract}
	 We investigate a class of combinatorial sums involving reciprocals of central binomial coefficients , employing generating functions as the primary solution technique to formulate and analyze series involving the Catalan's constant. Using a direct approach, we derive new identities through integral techniques.
	\end{abstract}
	
	\noindent 2010 {\it Mathematics Subject Classification}:  11B65, 33BXX, 33C05.
	
	\noindent \emph{Keywords:} Catalan's constant, central binomial coefficients, reciprocals of binomial coefficients. Hyper-geometric functions.
	
	\bigskip

	\section{Introduction}
\par In the field of mathematics, Catalan's constant, denoted as 
$G$, represents a fascinating quantity characterized by its unique properties. Specifically, it is defined as the alternating sum of the reciprocals of the odd square numbers. This can be mathematically expressed as follows:
$$G:=\sum_{n=0}^{\infty}\frac{(-1)^n}{(2n+1)^2}\approx 0.915965..$$
This intriguing constant arises in various mathematical contexts, particularly in number theory and combinatorial mathematics, where it serves as a fundamental element in the study of series and special functions. The behavior of Catalan's constant reveals deep connections to other mathematical constants and functions, showcasing the rich interplay within the realm of mathematical analysis.
\newpage
In (\cite{David},Pg 2), we have that,
$$G=\int_0^1\frac{\arctan x}{x}\,\mathrm{d}x=\frac{1}{2}\int_0^{\infty}\frac{x}{\cosh x}\,\mathrm{d}x=2\int_0^{\frac{\pi}{4}}\log(2\cos x)\,\mathrm{d}x$$
which serves as some basic integral representation of $G$.\par Now, the binomial coefficient $\binom{n}{m}$ is defined by $$\binom{n}{m}=\begin{cases}
\frac{n!}{m!(n-m)!}, & \text{if} \ n\geq m;\\
0,                   & \text{if} \ n<m.
\end{cases}$$
where $n$ and $m$ are non-negative integers. For some results involving the inverse of binomial coefficients, see \cite{Batir},\cite{Batirr},\cite{Lehmer}.\par Among the conclusions drawn in this paper, we will ascertain that if $n$ is a non-negative integer then 	$$\sum_{n=1}^{\infty}\binom{2n}{n}\binom{2n+2}{n+1}\frac{1}{16^n(n+1)(2n+3)}=\frac{22}{3}-\frac{8}{\pi}-\frac{16G}{\pi}$$
Throughout this paper, we verify our results using Computer Algebra System (CAS) software Mathematica 13.3.
\section{Generating Functions}
\par This section provides essential preliminary concepts that will serve as foundational building blocks for the analyses and results presented in the subsequent section.\par By establishing these key Lemmas, we aim to create a structured basis that will facilitate a clearer understanding of the subsequent results to follow. We proceed as follows:
\begin{lemma}
	\label{lemma 1}
For $x\in(0,4)$, then
\begin{equation}
	\label{1}
	\sum_{n=1}^{\infty}\frac{x^n}{n\binom{2n}{n}}=2\sqrt{\frac{x}{4-x}}\arctan\sqrt{\frac{x}{4-x}}
\end{equation}
\end{lemma}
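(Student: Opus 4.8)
The plan is to represent the reciprocal central binomial coefficient by a Beta integral, exchange summation and integration, sum the resulting geometric series, and then evaluate an elementary rational integral.

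First I would recall the Beta-function identity
$$\frac{1}{\binom{2n}{n}} = \frac{n!\,n!}{(2n)!} = n\,B(n,n+1) = n\int_0^1 t^{n-1}(1-t)^n\,\mathrm{d}t,$$
so that dividing by $n$ gives $\frac{1}{n\binom{2n}{n}}=\int_0^1 t^{n-1}(1-t)^n\,\mathrm{d}t$. Substituting this into the left-hand side of \eqref{1} and interchanging the order of summation and integration yields
$$\sum_{n=1}^\infty \frac{x^n}{n\binom{2n}{n}} = \int_0^1 \frac1t \sum_{n=1}^\infty \bigl(x t(1-t)\bigr)^n \,\mathrm{d}t.$$
The interchange is legitimate because for $x\in(0,4)$ and $t\in(0,1)$ one has $x\,t(1-t)\le x/4<1$, so the series converges and all terms are nonnegative, whence Tonelli's theorem applies directly.

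Next I would sum the geometric series to $\frac{x t(1-t)}{1-x t(1-t)}$ and cancel the factor $t$, reducing the problem to evaluating
$$\int_0^1 \frac{x(1-t)}{1-x t+x t^2}\,\mathrm{d}t.$$
I would split the numerator as $x(1-t)=-\tfrac12\frac{\mathrm{d}}{\mathrm{d}t}\bigl(1-x t+x t^2\bigr)+\tfrac{x}{2}$. The exact-derivative piece integrates to $-\tfrac12\bigl[\log(1-x t+x t^2)\bigr]_0^1=0$, since the quadratic equals $1$ at both endpoints, so only the $\tfrac{x}{2}$ term survives.

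Finally I would complete the square, writing $1-x t+x t^2 = x\bigl(t-\tfrac12\bigr)^2+\tfrac{4-x}{4}$, substitute $u=t-\tfrac12$, and recognize a standard arctangent integral; setting $a=\tfrac12\sqrt{(4-x)/x}$ gives $\tfrac{1}{a}\arctan\tfrac{1}{2a}$, and the identities $\tfrac{1}{2a}=\sqrt{x/(4-x)}$ and $\tfrac{1}{a}=2\sqrt{x/(4-x)}$ deliver the claimed closed form. The only genuinely delicate step is justifying the term-by-term integration; the remainder is a routine but bookkeeping-heavy reduction of a rational integral to an arctangent, where the vanishing of the logarithmic part (a consequence of the symmetry $t\mapsto 1-t$ of the quadratic) is the pleasant simplification that makes the final answer clean.
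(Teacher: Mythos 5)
Your proof is correct and follows essentially the same route as the paper: a Beta-integral representation of $1/\bigl(n\binom{2n}{n}\bigr)$, interchange of sum and integral, summation of a geometric series, and reduction to an arctangent integral. The only difference is that you use the algebraic form $\int_0^1 t^{n-1}(1-t)^n\,\mathrm{d}t$ while the paper uses the trigonometric form $\int_0^{\pi/2}\sin^{2n-1}\theta\cos^{2n-1}\theta\,\mathrm{d}\theta$ followed by the substitution $p=\cos 2\theta$; these are equivalent under $t=\sin^2\theta$, and your version actually spells out the final integration more carefully than the paper does.
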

\begin{proof}
	Observe that,
\begin{align*}
	\sum_{n=1}^{\infty}\frac{x^n}{n\binom{2n}{n}}&=\frac{1}{2}\sum_{n=1}^{\infty}x^n\int_0^{\frac{\pi}{2}}2\sin^{2n-1}\theta\cos^{2n-1}\theta\,\mathrm{d}\theta=\int_0^{\frac{\pi}{2}}\frac{1}{\sin\theta\cos\theta}\sum_{n=1}^{\infty}(x\sin^2\theta\cos^2\theta)^n\,\mathrm{d}\theta\\
	&=\int_0^{\frac{\pi}{2}}\frac{x\sin\theta\cos\theta}{1-x\sin^2\theta\cos^2\theta}\,\mathrm{d}\theta = \frac{x}{2}\int_0^{\frac{\pi}{2}}\frac{\sin 2\theta}{1-\frac{x}{4}\sin 2\theta}\,\mathrm{d}\theta.
\end{align*}
Set $p=\cos 2\theta$ and the result follows.
\end{proof}
\begin{lemma}
	For all $x\in(0,4]$, then
	\begin{equation}
		\label{lemma 2}
	\sum_{n=1}^{\infty}\frac{x^n}{n^2\binom{2n}{n}}	=2\arctan^2\left(\sqrt{\frac{x}{4-x}}\right)
	\end{equation}
\end{lemma}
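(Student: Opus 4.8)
The plan is to produce the factor $n^2$ in the denominator of \eqref{lemma 2} out of the factor $n$ appearing in Lemma~\ref{lemma 1} by a single integration. Writing $f(t)$ for the sum in \eqref{1}, I would divide by $t$ and integrate term by term from $0$ to $x$: since the typical summand of $f(t)/t$ is $\frac{t^{n-1}}{n\binom{2n}{n}}$ and $\int_0^x t^{n-1}\,dt = x^n/n$, each term contributes $\frac{x^n}{n^2\binom{2n}{n}}$, so that
$$\sum_{n=1}^\infty \frac{x^n}{n^2\binom{2n}{n}} = \int_0^x \frac{f(t)}{t}\,dt = \int_0^x \frac{2}{\sqrt{t(4-t)}}\arctan\sqrt{\tfrac{t}{4-t}}\,dt,$$
where in the last step I substituted the closed form from Lemma~\ref{lemma 1}. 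The interchange of summation and integration is legitimate on any compact subinterval $[0,b]\subset[0,4)$, since the power series converges uniformly there.

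The decisive computation is the substitution that linearises the remaining integral. Setting $t = 4\sin^2 u$, equivalently $u=\arctan\sqrt{t/(4-t)}$, one finds $\sqrt{t(4-t)} = 4\sin u\cos u$ and $\mathrm{d}t = 8\sin u\cos u\,\mathrm{d}u$, so the $\sqrt{t(4-t)}$ in the denominator cancels against $\mathrm{d}t$ while the arctangent becomes simply $u$. The integrand collapses to $4u\,\mathrm{d}u$, and integrating from $u=0$ (at $t=0$) to $u=\arctan\sqrt{x/(4-x)}$ (at $t=x$) yields $2\arctan^2\sqrt{x/(4-x)}$, which is exactly the right-hand side of \eqref{lemma 2}. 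This settles the identity for $x\in(0,4)$.

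I expect the main obstacle to be the endpoint $x=4$, which the statement includes but which falls outside the range of Lemma~\ref{lemma 1}: there both $f(t)$ and the closed form in \eqref{1} diverge as $t\to 4^-$, so the integral above is improper at its upper limit. I would resolve this by an appeal to Abel's theorem. By Stirling, $\binom{2n}{n}\sim 4^n/\sqrt{\pi n}$, so the coefficients satisfy $\frac{4^n}{n^2\binom{2n}{n}} \sim \sqrt{\pi}\,n^{-3/2}$, whence the series in \eqref{lemma 2} converges at $x=4$. Since $x\mapsto 2\arctan^2\sqrt{x/(4-x)}$ is continuous from the left at $x=4$ with limit $\pi^2/2$, and since the two sides already agree on $(0,4)$, Abel's theorem permits passing to the limit $x\to 4^-$ and concluding equality at $x=4$ as well.
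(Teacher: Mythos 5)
Your proof is correct, and it reaches the identity by a genuinely shorter road than the paper. The paper does not integrate Lemma~\ref{lemma 1} directly; instead it starts from the Beta-integral representation $\frac{1}{n^2\binom{2n}{n}}=\frac{1}{2n}\int_0^1(w-w^2)^{n-1}\,\mathrm{d}w$, sums the resulting logarithmic series to get $-\frac{1}{2}\int_0^1\frac{\log(1-xw(1-w))}{w(1-w)}\,\mathrm{d}w$, and then differentiates under the integral sign in $x$; that derivative turns out to be $-\frac{2}{x}$ times the closed form of \eqref{1}, so the final antidifferentiation the paper must perform is precisely the integral you evaluate with the substitution $t=4\sin^2u$. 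In other words, the two arguments converge on the same key antiderivative of $\frac{2}{\sqrt{t(4-t)}}\arctan\sqrt{t/(4-t)}$, but you get there in one step by dividing \eqref{1} by $t$ and integrating term by term, whereas the paper in effect rederives Lemma~\ref{lemma 1} inside its computation of $J'(x)$. Your version also supplies something the paper silently skips: the lemma is claimed for $x\in(0,4]$, yet the paper's argument (like Lemma~\ref{lemma 1} itself) only covers $(0,4)$ and says nothing about the endpoint, where the closed form must be read as a limit. Your Abel-theorem step --- convergence of $\sum_{n\ge1}4^n n^{-2}\binom{2n}{n}^{-1}$ via Stirling's estimate $\binom{2n}{n}\sim 4^n/\sqrt{\pi n}$, together with the left limit $\pi^2/2$ of the right-hand side --- is exactly what is needed to justify the case $x=4$, and is a worthwhile addition.
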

\begin{proof}
	Notice that we can write as follows;
\begin{align*}
\sum_{n=1}^{\infty}\frac{x^n}{n^2\binom{2n}{n}}	=\frac{1}{2}\sum_{n=1}^{\infty}\int_0^1\frac{x^n(w-w^2)^{n-1}}{n}\,\mathrm{d}w=\frac{1}{2}\int_0^1\frac{1}{w(1-w)}\sum_{n=1}^{\infty}\frac{(wx(1-w))^2}{n}\,\mathrm{d}w
\end{align*}
Note that $-\log(1-x)=\sum_{n=1}^{\infty}\frac{x^n}{n}$, thus we have	
\begin{align*}
\frac{1}{2}\int_0^1\frac{1}{w(1-w)}\sum_{n=1}^{\infty}\frac{(wx(1-w))^2}{n}\,\mathrm{d}w=-\frac{1}{2}\underbrace{\int_0^1\frac{\log(xw^2-xw+1)}{w(1-w)}\,\mathrm{d}w}_{J(x)}
\end{align*}
Observe,
\begin{align*}
	J'(x)=2\int_0^1\frac{\partial}{\partial x}\frac{\log(xw^2-xw+1)}{w}\,\mathrm{d}w=-\frac{4}{x}\sqrt{\frac{x}{4-x}}\arctan\sqrt{\frac{x}{4-x}}
\end{align*}
Integrating both sides, we obtain the desired result.
\end{proof}
\begin{lemma}
	\label{lemma 3}
	For all $x\in(0,4)$, then
\begin{equation}
	\sum_{n=1}^{\infty}\frac{x^{n+3/2}}{n(n+3/2)\binom{2n}{n}}=	\frac{4}{9}\sqrt{4-x}\left(\sqrt{\frac{x}{4-x}}(x+24)-3(x+8)\arctan\sqrt{\frac{x}{4-x}}\right)	
\end{equation}
\end{lemma}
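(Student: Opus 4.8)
The structure of the summand invites differentiation: the denominator factor $(n+3/2)$ is paired with the power $x^{n+3/2}$, and since $\frac{\mathrm{d}}{\mathrm{d}x}\frac{x^{n+3/2}}{n+3/2}=x^{n+1/2}$, differentiating term by term cancels the awkward half-integer factor and leaves exactly the kernel of Lemma~\ref{lemma 1}. So the plan is to set $S(x)=\sum_{n=1}^{\infty}\frac{x^{n+3/2}}{n(n+3/2)\binom{2n}{n}}$, note that $S(0)=0$, and compute $S'$ termwise. Since $\frac{1}{\binom{2n}{n}}$ decays like $4^{-n}$ up to a polynomial factor, the differentiated series converges uniformly on compact subsets of $(0,4)$, which justifies the interchange of $\frac{\mathrm{d}}{\mathrm{d}x}$ and $\sum$.

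Carrying this out gives $S'(x)=\sum_{n=1}^{\infty}\frac{x^{n+1/2}}{n\binom{2n}{n}}=\sqrt{x}\sum_{n=1}^{\infty}\frac{x^{n}}{n\binom{2n}{n}}$, and Lemma~\ref{lemma 1} turns the remaining sum into a closed form, yielding
\[
S'(x)=\sqrt{x}\cdot 2\sqrt{\tfrac{x}{4-x}}\arctan\sqrt{\tfrac{x}{4-x}}=\frac{2x}{\sqrt{4-x}}\arctan\sqrt{\tfrac{x}{4-x}}.
\]
Because $S(0)=0$, the Fundamental Theorem of Calculus then reduces the whole problem to the single integral $S(x)=\int_0^x \frac{2t}{\sqrt{4-t}}\arctan\sqrt{\frac{t}{4-t}}\,\mathrm{d}t$.

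To evaluate it I would substitute $t=4\sin^2\phi$, so that $\sqrt{4-t}=2\cos\phi$, $\arctan\sqrt{\frac{t}{4-t}}=\phi$, and $\mathrm{d}t=8\sin\phi\cos\phi\,\mathrm{d}\phi$; the integrand collapses neatly to $S(x)=32\int_0^{\phi_x}\phi\sin^3\phi\,\mathrm{d}\phi$, where $\phi_x=\arctan\sqrt{\frac{x}{4-x}}$. Expanding $\sin^3\phi=\frac{3\sin\phi-\sin 3\phi}{4}$ and integrating $\int\phi\sin k\phi\,\mathrm{d}\phi$ by parts for $k=1,3$ produces an antiderivative in terms of $\phi$, $\sin\phi$, $\cos\phi$, $\sin 3\phi$, $\cos 3\phi$. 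Finally I would apply the triple-angle identities together with $\sin\phi=\tfrac{\sqrt{x}}{2}$, $\cos\phi=\tfrac{\sqrt{4-x}}{2}$, and $\cos^2\phi=\tfrac{4-x}{4}$ to convert back to $x$; the $\phi\cos\phi$ and $\phi\cos 3\phi$ terms combine through $\cos^2\phi-3=-\tfrac{x+8}{4}$ into the coefficient $(x+8)$, while the $\sin\phi,\sin 3\phi$ terms collect into $\sqrt{x}(x+24)$, reproducing the claimed formula.

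The main obstacle is not conceptual but bookkeeping: the trigonometric simplification in the last step, where several terms must be regrouped via the triple-angle formulas so that the bare $\sqrt{x}$, $x^{3/2}$, and $\arctan$ contributions align with the asserted factors $(x+24)$ and $3(x+8)$. A secondary technical point worth stating explicitly is the uniform-convergence justification for differentiating under the summation sign, which guarantees that $S'$ equals the termwise derivative on all of $(0,4)$ and hence that the integral representation is valid throughout the stated interval.
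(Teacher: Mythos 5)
Your proposal is correct and follows essentially the same route as the paper, which simply multiplies Lemma \ref{lemma 1} by $\sqrt{x}$ and integrates with respect to $x$; your termwise differentiation of $S$ is that same step read in reverse. The only difference is that you actually carry out the resulting integral (via $t=4\sin^2\phi$, reducing it to $32\int\phi\sin^3\phi\,\mathrm{d}\phi$), which the paper leaves implicit, and your evaluation does reproduce the stated closed form.
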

\begin{proof}
From Lemma \ref{lemma 1}, multiply both sides by $\sqrt{x}$ and then integrate with respect to $x$. Thus, the result follows.
\end{proof}

\par Numerous authors have proposed similar generating functions expressed in terms of the arcsine function, but the difference is very minimal, (See, \cite{Lehmer}).
\begin{lemma}
	\label{lemma 4}
For all $x\in[-1,1]$
\begin{equation}
	\sum_{n=1}^{\infty}\binom{2n}{n}\frac{x^{2n+2}}{2^{2n+1}(n+1)(2n+3)}=1-\frac{x^2}{6}-\frac{\sqrt{1-x^2}}{2}-\frac{\arcsin x}{2x}
\end{equation}
\end{lemma}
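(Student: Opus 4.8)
The plan is to reduce the left-hand side to two elementary generating functions by partial fractions. Writing $2^{2n+1}=2\cdot 4^{n}$ and factoring out $\frac12$, I would first decompose
$$\frac{1}{(n+1)(2n+3)}=\frac{1}{n+1}-\frac{2}{2n+3},$$
which splits the series into a Catalan-type piece and an arcsine-type piece. Each piece is then summed against the standard central binomial generating function $\sum_{n=0}^{\infty}\binom{2n}{n}\frac{x^{2n}}{4^{n}}=(1-x^2)^{-1/2}$, valid for $|x|<1$.

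For the $\frac{1}{n+1}$ piece I would invoke the Catalan number generating function $\sum_{n=0}^{\infty}\frac{1}{n+1}\binom{2n}{n}t^{n}=\frac{1-\sqrt{1-4t}}{2t}$ with $t=x^2/4$; multiplying through by $x^2$ and subtracting the $n=0$ term yields $\sum_{n=1}^{\infty}\frac{1}{n+1}\binom{2n}{n}\frac{x^{2n+2}}{4^{n}}=2-2\sqrt{1-x^2}-x^2$. For the $\frac{1}{2n+3}$ piece I would note that $\frac{x^{2n+3}}{2n+3}=\int_0^{x}t^{2n+2}\,\mathrm{d}t$, so that
$$\sum_{n=0}^{\infty}\binom{2n}{n}\frac{x^{2n+3}}{4^{n}(2n+3)}=\int_0^{x}\frac{t^2}{\sqrt{1-t^2}}\,\mathrm{d}t=\frac{\arcsin x}{2}-\frac{x\sqrt{1-x^2}}{2}.$$
Dividing by $x$ and removing the $n=0$ contribution $\frac{x^2}{3}$ then gives the corresponding tail summed from $n=1$.

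Assembling the two pieces with the correct signs and the overall factor $\frac12$, the $\sqrt{1-x^2}$ terms collapse to $-\frac{1}{2}\sqrt{1-x^2}$, the polynomial terms $-\frac{x^2}{2}+\frac{x^2}{3}$ combine to $-\frac{x^2}{6}$, and the arcsine term survives as $-\frac{\arcsin x}{2x}$, producing exactly the claimed right-hand side. The individual computations are routine; the only genuine care needed is the bookkeeping of the $n=0$ terms (both auxiliary generating functions begin at $n=0$ while the target sum begins at $n=1$) and the justification of interchanging summation with integration. I expect the main obstacle to be the endpoints $x=\pm1$: the derivation above is valid on $(-1,1)$, where $(1-x^2)^{-1/2}$ converges, and one extends the identity to $x=\pm1$ by Abel's theorem, using the convergence of the series there together with the continuity of the right-hand side.
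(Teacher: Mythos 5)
Your proof is correct, and it takes a genuinely different route from the paper's. The paper starts from $\sum_{n\ge 1}\binom{2n}{n}x^{2n}/4^n=(1-x^2)^{-1/2}-1$, multiplies by $x^2$ and integrates once to produce the factor $1/(2n+3)$, then divides by $x^2$ and integrates a second time to produce $1/(2n+2)$, fixing both constants of integration by letting $x\to 0$. You instead decompose $\frac{1}{(n+1)(2n+3)}=\frac{1}{n+1}-\frac{2}{2n+3}$ and close each piece separately: the first via the Catalan-number generating function, the second via a single integration of $t^2(1-t^2)^{-1/2}$ (this second piece coincides with the paper's first integration step). Your bookkeeping checks out: $\tfrac12\bigl(2-2\sqrt{1-x^2}-x^2\bigr)-\bigl(\tfrac{\arcsin x}{2x}-\tfrac{\sqrt{1-x^2}}{2}-\tfrac{x^2}{3}\bigr)$ reproduces the stated right-hand side exactly, with the $n=0$ corrections $x^2$ and $x^2/3$ accounted for. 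What your route buys is the avoidance of the paper's second integration, which requires antiderivatives of $\arcsin(x)/x^2$ and $\sqrt{1-x^2}/x$ and depends on the cancellation of the two $\arctanh(\sqrt{1-x^2})$ terms they generate; you also make the extension to the endpoints $x=\pm 1$ explicit via Abel's theorem (the terms are $O(n^{-5/2})$ there since $\binom{2n}{n}4^{-n}\sim(\pi n)^{-1/2}$), a point the paper's proof passes over in silence.
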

\begin{proof}
	It can be shown that $$\sum_{n=1}^{\infty}\binom{2n}{n}\frac{x^{2n}}{4^n}=\frac{1}{\sqrt{1-x^2}}-1$$
Thus, multiply both sides of the above identity by $x^2$ and then integrate both sides to get,
$$\sum_{n=1}^{\infty}\binom{2n}{n}\frac{x^{2n+3}}{(2n+3)4^n}=\frac{1}{6}(3\arcsin x-3x\sqrt{1-x^2}-2x^3)+C$$
Observe, as $x\to 0$ we have that $C\to 0$. Hence $C=0$. By dividing through by $x^2$ and integrating both sides, we obtain;
$$	\sum_{n=1}^{\infty}\binom{2n}{n}\frac{x^{2n+2}}{2^{2n+1}(n+1)(2n+3)}=-\frac{x^2}{6}-\frac{\sqrt{1-x^2}}{2}-\frac{\arcsin x}{2x}+C_1$$
Similarly as $x\to 0$, we have that $C_1\to 1$. Hence, the result follows directly.
\end{proof}
\begin{lemma}
	\label{lemma 5}
For all $x\in [-1,1]$
$$\sum_{n=1}^{\infty}\binom{2n}{n}\frac{x^{2n+2}}{16^n(n+1)^2(2n+1)}=\log 2-2+x\arcsin x+2\sqrt{1-x^2}-\log x-\arctanh(\sqrt{1-x^2})-\frac{x^2}{4}$$
\end{lemma}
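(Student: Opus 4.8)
The plan is to follow the template of Lemma 4: start from an elementary central–binomial generating function and insert the denominator factors $(n+1)^2(2n+1)$ by repeated integration, rather than attacking all three factors at once. Concretely, I would begin from the series
\begin{equation*}
\sum_{n=1}^{\infty}\binom{2n}{n}\frac{x^{2n}}{4^n}=\frac{1}{\sqrt{1-x^2}}-1,
\end{equation*}
already used in the proof of Lemma 4. The organising device is the partial-fraction identity
\begin{equation*}
\frac{1}{(n+1)^2(2n+1)}=\frac{4}{2n+1}-\frac{2}{n+1}-\frac{1}{(n+1)^2},
\end{equation*}
which splits the target sum into three pieces, each generable from the base series by elementary operations.

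For the $\tfrac{4}{2n+1}$ piece I would integrate the base series once, obtaining $\sum_{n\ge1}\binom{2n}{n}\frac{x^{2n+1}}{4^n(2n+1)}=\arcsin x-x$, and then multiply by $x$ to match the exponent $x^{2n+2}$. For the $\tfrac{2}{n+1}$ piece I would instead multiply the base series by $t$ before integrating, since $\int_0^x t\,(1-t^2)^{-1/2}\,\mathrm{d}t$ produces the factor $\tfrac1{n+1}$ together with the closed form $2-2\sqrt{1-x^2}-x^2$. The most delicate piece is $\tfrac{1}{(n+1)^2}$: starting from the closed form just obtained for the $\tfrac1{n+1}$ piece, I would divide by $t$ and integrate once more, which forces the evaluation of $\int \frac{\sqrt{1-t^2}}{t}\,\mathrm{d}t$ and thereby introduces a logarithmic term of the shape $\log\bigl(1+\sqrt{1-x^2}\bigr)$.

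Collecting the three contributions and fixing the constants of integration by letting $x\to0$ — exactly as the constants $C$ and $C_1$ were pinned down in Lemma 4 — yields a closed form containing $\log\bigl(1+\sqrt{1-x^2}\bigr)$. The final step is the cosmetic simplification
\begin{equation*}
\log\bigl(1+\sqrt{1-x^2}\bigr)=\log x+\arctanh\bigl(\sqrt{1-x^2}\bigr),
\end{equation*}
which follows from $\arctanh y=\tfrac12\log\frac{1+y}{1-y}$ taken at $y=\sqrt{1-x^2}$: rationalising the quotient turns its denominator into $x^2$, and the resulting $\tfrac12\log\bigl((1+\sqrt{1-x^2})^2/x^2\bigr)$ is precisely $\log(1+\sqrt{1-x^2})-\log x$. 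Substituting this converts the answer into the stated right-hand side.

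I expect the main obstacle to be the $\tfrac{1}{(n+1)^2}$ summand, as it is the only one of the three whose closed form is singular as $x\to0$; one must check that the $-\log x$ singularity cancels against the $\arctanh$ singularity and read the constant of integration off the finite part of this cancellation rather than from a naive substitution $x=0$. A secondary point worth verifying at the outset is the normalisation of the base series: the right-hand side, assembled entirely from $\arcsin x$ and $\sqrt{1-x^2}$, is consistent with the $4^n$ scaling of the generating function, and matching the coefficient of the lowest surviving power $x^4$ confirms this normalisation before the integrations begin.
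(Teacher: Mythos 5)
Your strategy is sound and genuinely different in organisation from the paper's. The paper never splits the denominator: it starts from $\sum_{n\ge 1}\binom{2n}{n}\frac{x^{2n+1}}{4^n(2n+1)}=\arcsin x-x$, integrates once to produce the factor $\frac{1}{(2n+1)(2n+2)}$, then divides by $x$ and integrates again, so that the denominator $(2n+1)(2n+2)^2=4(2n+1)(n+1)^2$ is built multiplicatively in two steps and the logarithm enters through $\int\frac{\sqrt{1-t^2}}{t}\,\mathrm{d}t$ exactly as in your third piece. Your additive decomposition $\frac{1}{(n+1)^2(2n+1)}=\frac{4}{2n+1}-\frac{2}{n+1}-\frac{1}{(n+1)^2}$ is correct, and each of the three closed forms you describe checks out; assembling them and applying your $\arctanh$ identity does reproduce the stated right-hand side. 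The cost is three separate integrations instead of two; the benefit is that only one of your pieces is singular at $x=0$, so the constant-of-integration bookkeeping is more transparent than in the paper's second integration, which silently absorbs the $-\log x$ and $\log 2$ terms.

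One caveat on the point you flag at the end: the $16^n$ in the statement is \emph{not} consistent with the $4^n$ scaling. Both your computation and the paper's produce the series with $4^{n+1}=4\cdot 4^n$ in the denominator, and it is that series (not the one with $16^n$) which equals the stated right-hand side; a numerical check at $x=1$ confirms this. Your proposed sanity check at the lowest surviving power $x^4$ would not catch the discrepancy, because $16^1=4^{1+1}$ — the first disagreement occurs at $x^6$. So carry the check one order further; as written, your assertion that the normalisation is consistent would let this error slip through. (The restriction to $x\in(0,1]$ rather than $[-1,1]$ is likewise forced by the $\log x$ term, but that is the statement's problem, not yours.)
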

\begin{proof}
	Since, we can show that 
	$$\sum_{n=1}^{\infty}\binom{2n}{n}\frac{x^{2n+1}}{4^n(2n+1)}=\arcsin x-x$$
	Integrating both sides of the above identity we get a new identity 
	$$\sum_{n=1}^{\infty}\binom{2n}{n}\frac{x^{2n+2}}{4^n(2n+1)(2n+2)}=x\arcsin x+\sqrt{1-x^2}-\frac{x^2}{2}-1$$
	Dividing both sides by $x$ and integrating both sides. The desired result follows immediately.
\end{proof}
\begin{lemma}
	\label{lemma 6}
	For all $x\in[-1,1]$
$$\sum_{n=1}^{\infty}\binom{2n}{n}\frac{x^{2n+2}}{2^{2n+1}(n+1)(2n+1)(2n+3)}=\frac{1}{12}\left(9\sqrt{1-x^2}+\frac{(6x^2+3)}{x}\arcsin x-2x^2-12\right)$$
\end{lemma}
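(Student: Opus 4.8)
The plan is to reduce the three-factor denominator $(n+1)(2n+1)(2n+3)$ to a two-factor case that has already been handled, by performing a single integration. First I would recall the intermediate identity established inside the proof of Lemma \ref{lemma 5}, namely
$$\sum_{n=1}^{\infty}\binom{2n}{n}\frac{x^{2n+2}}{4^n(2n+1)(2n+2)}=x\arcsin x+\sqrt{1-x^2}-\frac{x^2}{2}-1.$$
Since $2n+2=2(n+1)$, multiplying by $2$ converts this into a closed form for
$$h(x):=\sum_{n=1}^{\infty}\binom{2n}{n}\frac{x^{2n+2}}{4^n(n+1)(2n+1)}=2x\arcsin x+2\sqrt{1-x^2}-x^2-2,$$
which already carries the factors $(n+1)$ and $(2n+1)$; only the factor $(2n+3)$ is missing.

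To manufacture the remaining factor I would integrate $h$ term by term. Because $\int_0^x t^{2n+2}\,\mathrm{d}t=x^{2n+3}/(2n+3)$, this gives
$$\int_0^x h(t)\,\mathrm{d}t=\sum_{n=1}^{\infty}\binom{2n}{n}\frac{x^{2n+3}}{4^n(n+1)(2n+1)(2n+3)},$$
so that dividing by $x$ and absorbing the factor $\tfrac12$ that converts $4^n$ into $2^{2n+1}$ reproduces exactly the left-hand side of the lemma,
$$\sum_{n=1}^{\infty}\binom{2n}{n}\frac{x^{2n+2}}{2^{2n+1}(n+1)(2n+1)(2n+3)}=\frac{1}{2x}\int_0^x h(t)\,\mathrm{d}t.$$
Termwise integration is legitimate: the summand of $h$ is bounded in modulus by $C/n^{2}$ uniformly for $x\in[-1,1]$, so the series converges uniformly there and the endpoints $x=\pm1$ require no separate argument. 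The apparent singularity at $x=0$ coming from $\arcsin x/x$ is removable, and both sides vanish as $x\to0$.

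It then remains to evaluate $\int_0^x h$ in closed form. The only nonelementary pieces are $\int_0^x t\arcsin t\,\mathrm{d}t$, handled by parts together with $\int t^2/\sqrt{1-t^2}\,\mathrm{d}t=\tfrac12\arcsin t-\tfrac12 t\sqrt{1-t^2}$, and $\int_0^x\sqrt{1-t^2}\,\mathrm{d}t=\tfrac12\left(x\sqrt{1-x^2}+\arcsin x\right)$. Collecting the contributions gives
$$\int_0^x h(t)\,\mathrm{d}t=x^2\arcsin x+\tfrac12\arcsin x+\tfrac32 x\sqrt{1-x^2}-\tfrac13 x^3-2x,$$
and dividing by $2x$ while regrouping the two $\arcsin$ terms as $(6x^2+3)\arcsin x/(12x)$ produces the stated right-hand side $\tfrac{1}{12}\bigl(9\sqrt{1-x^2}+(6x^2+3)x^{-1}\arcsin x-2x^2-12\bigr)$. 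The main obstacle is purely computational, namely carrying out the integration by parts and the ensuing algebraic regrouping so that every coefficient lines up exactly; a useful consistency check is the vanishing of both sides at $x=0$, which pins down the constant of integration. I note that the same closed form can be reached without any new integration by writing $\tfrac{1}{(2n+1)(2n+3)}=\tfrac12\bigl(\tfrac{1}{2n+1}-\tfrac{1}{2n+3}\bigr)$ and combining $h$ with (twice) the identity of Lemma \ref{lemma 4}, which offers an independent verification.
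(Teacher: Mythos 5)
Your proposal is correct and follows essentially the same route as the paper: the paper's proof also starts from the identity $\sum_{n\ge1}\binom{2n}{n}\frac{x^{2n+2}}{2^{2n+1}(n+1)(2n+1)}=x\arcsin x+\sqrt{1-x^2}-\frac{x^2}{2}-1$ (which is exactly your $\tfrac12 h(x)$, rewritten using $4^n(2n+2)=2^{2n+1}(n+1)$) and then introduces the factor $(2n+3)$ by integrating and dividing by $x$. You have simply carried out explicitly the integration and algebraic regrouping that the paper leaves as "the result follows," and your final closed form agrees with the stated right-hand side.
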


\begin{proof}
	Notice, 
	$$\sum_{n=1}^{\infty}\binom{2n}{n}\frac{x^{2n+2}}{2^{2n+1}(n+1)(2n+1)}=x\arcsin x+\sqrt{1-x^2}-\frac{x^2}{2}-1$$
The result follows from the above identity.
\end{proof}
\section{Main Results}
\begin{theorem}
	If $n$ is a non-negative integer, then we have
	\begin{equation}
		\sum_{n=1}^{\infty}\binom{2n}{n}\binom{2n+2}{n+1}\frac{1}{16^n(n+1)(2n+3)}=\frac{22}{3}-\frac{8}{\pi}-\frac{16G}{\pi}
	\end{equation}
\end{theorem}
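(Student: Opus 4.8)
The plan is to reduce this double–binomial sum to a single application of Lemma~\ref{lemma 4} by writing the extra central binomial coefficient as a Wallis-type integral, thereby turning the series into one definite integral in which Catalan's constant appears naturally. Write $S$ for the series on the left-hand side.

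First I would recall the Wallis identity $\frac{1}{4^m}\binom{2m}{m}=\frac{2}{\pi}\int_0^{\pi/2}\sin^{2m}\theta\,\mathrm{d}\theta$. Taking $m=n+1$ and using $4^{n+1}/16^n=4/4^n$ gives
$$\frac{1}{16^n}\binom{2n+2}{n+1}=\frac{8}{\pi}\cdot\frac{1}{4^n}\int_0^{\pi/2}\sin^{2n+2}\theta\,\mathrm{d}\theta.$$
Substituting this into $S$ and interchanging summation and integration (legitimate by Tonelli, since every term is nonnegative on $(0,\pi/2)$) yields
$$S=\frac{8}{\pi}\int_0^{\pi/2}\sin^2\theta\left(\sum_{n=1}^{\infty}\binom{2n}{n}\frac{(\sin^2\theta)^n}{4^n(n+1)(2n+3)}\right)\mathrm{d}\theta.$$

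Next I would invoke Lemma~\ref{lemma 4}: writing $2^{2n+1}=2\cdot4^n$ and dividing that identity by $x^2$ gives a closed form for the inner sum, and setting $x=\sin\theta$ (so that $\sqrt{1-x^2}=\cos\theta$ and $\arcsin x=\theta$ on $[0,\pi/2]$) turns the inner sum into $\frac{2}{\sin^2\theta}\bigl(1-\frac{\sin^2\theta}{6}-\frac{\cos\theta}{2}-\frac{\theta}{2\sin\theta}\bigr)$. The prefactor $\sin^2\theta$ then cancels, so that
$$S=\frac{16}{\pi}\int_0^{\pi/2}\left(1-\frac{\sin^2\theta}{6}-\frac{\cos\theta}{2}-\frac{\theta}{2\sin\theta}\right)\mathrm{d}\theta.$$
Three of the four pieces are elementary, contributing $\frac{\pi}{2}$, $-\frac{\pi}{24}$, and $-\frac{1}{2}$.

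The one genuinely non-elementary step — and the source of $G$ — is the final integral $\int_0^{\pi/2}\frac{\theta}{\sin\theta}\,\mathrm{d}\theta$. I would evaluate it by parts, taking $\log\tan(\theta/2)$ as the antiderivative of $\csc\theta$: the boundary terms vanish and one is left with $-2\int_0^{\pi/4}\log\tan u\,\mathrm{d}u=2G$, the standard log-tangent evaluation of Catalan's constant. Feeding the four values into the display gives $S=\frac{16}{\pi}\bigl(\frac{\pi}{2}-\frac{\pi}{24}-\frac{1}{2}-G\bigr)=\frac{22}{3}-\frac{8}{\pi}-\frac{16G}{\pi}$, as claimed. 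The main obstacle is recognizing and justifying this log-tangent integral; everything else is routine bookkeeping, and the sum–integral interchange needs only the nonnegativity just noted.
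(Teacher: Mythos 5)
Your proposal is correct and follows essentially the same route as the paper: substituting $x=\sin\theta$ into Lemma~\ref{lemma 4}, integrating over $[0,\pi/2]$, and recognizing the moment $\int_0^{\pi/2}\sin^{2n+2}\theta\,\mathrm{d}\theta$ via the Wallis formula as $\frac{\pi}{2^{2n+3}}\binom{2n+2}{n+1}$, with the Catalan constant emerging from $\int_0^{\pi/2}\frac{\theta}{2\sin\theta}\,\mathrm{d}\theta=G$. Your write-up merely reads the computation in the opposite direction (inserting the integral representation into the sum rather than integrating the lemma) and supplies the Tonelli justification and the log-tangent evaluation of $G$ that the paper leaves to a citation.
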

\begin{proof}
	From Lemma \ref{lemma 4}, set $x=\sin y$ and using the walli's integral formula;
	$$\binom{2n}{n}=\frac{2}{\pi}\int_0^{\frac{\pi}{2}}2^{2n}\sin^{2n}t\,\mathrm{d}t$$
	Thus, 
	\begin{align*}
	\sum_{n=1}^{\infty}\binom{2n}{n}\binom{2n+2}{n+1}\frac{\pi}{2^{4n+4}(n+1)(2n+3)}&=\int_0^{\frac{\pi}{2}}\left(1-\frac{\sin^2x}{6}-\frac{\cos x}{2}-\frac{x}{2\sin x}\right)\,\mathrm{d}x\\&=\frac{11\pi}{24}-\frac{1}{2}-G
\end{align*}
Since $\int_0^{\pi/2}\frac{x}{2\sin x}\,\mathrm{d}x=G$, check [\cite{David},Pg 2].
The result follows immediately.
\end{proof}
\begin{theorem}
	If n is a non-negative integer, then 
\begin{equation}
	\sum_{n=1}^{\infty}\binom{2n}{n}\binom{2n+2}{n+1}\frac{1}{64^n(n+1)^2(2n+1)}=8\log 2-\frac{17}{2}+\frac{24}{\pi}-\frac{16 G}{\pi}
\end{equation}
\end{theorem}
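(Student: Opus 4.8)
The plan is to mirror the proof of the previous theorem, but starting from Lemma~\ref{lemma 5} instead of Lemma~\ref{lemma 4}. First I would substitute $x=\sin y$ for $y\in[0,\pi/2]$, where $\arcsin(\sin y)=y$ and $\sqrt{1-\sin^2 y}=\cos y$, so that the right-hand side of Lemma~\ref{lemma 5} becomes the explicit function
$$\log 2-2+y\sin y+2\cos y-\log(\sin y)-\arctanh(\cos y)-\frac{\sin^2 y}{4}.$$
Then I would integrate both sides over $[0,\pi/2]$ and, on the left, invoke the Wallis formula in the form $\int_0^{\pi/2}\sin^{2n+2}y\,\mathrm{d}y=\frac{\pi}{2^{2n+3}}\binom{2n+2}{n+1}$. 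Since $16^n\cdot 2^{2n+3}=8\cdot 64^n$, this turns the single central binomial coefficient into the product $\binom{2n}{n}\binom{2n+2}{n+1}$ and produces exactly the $64^n$ and an overall prefactor $\frac{\pi}{8}$, so that the target sum equals $\frac{8}{\pi}$ times the integral of the right-hand side.

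The bulk of the work is then the termwise evaluation of $\int_0^{\pi/2}$ of the expression above. Most pieces are elementary: the constants contribute $\frac{\pi}{2}\log 2-\pi$; integration by parts gives $\int_0^{\pi/2}y\sin y\,\mathrm{d}y=1$; one has $\int_0^{\pi/2}2\cos y\,\mathrm{d}y=2$; and $\int_0^{\pi/2}\sin^2 y\,\mathrm{d}y=\frac{\pi}{4}$ disposes of the last term. For the logarithmic term I would use the classical value $\int_0^{\pi/2}\log(\sin y)\,\mathrm{d}y=-\frac{\pi}{2}\log 2$.

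The main obstacle is the $\arctanh(\cos y)$ term, since this is where Catalan's constant must enter. I would rewrite $\arctanh(\cos y)=\frac{1}{2}\log\frac{1+\cos y}{1-\cos y}$ and apply the half-angle identities $1+\cos y=2\cos^2(y/2)$, $1-\cos y=2\sin^2(y/2)$ to obtain $\arctanh(\cos y)=-\log\tan(y/2)$. The substitution $u=y/2$ then reduces the integral to $-2\int_0^{\pi/4}\log\tan u\,\mathrm{d}u$, and the standard evaluation $\int_0^{\pi/4}\log\tan u\,\mathrm{d}u=-G$ gives $\int_0^{\pi/2}\arctanh(\cos y)\,\mathrm{d}y=2G$. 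Collecting all contributions yields $\pi\log 2-\frac{17\pi}{16}+3-2G$ for the integral, and multiplying by $\frac{8}{\pi}$ produces $8\log 2-\frac{17}{2}+\frac{24}{\pi}-\frac{16G}{\pi}$, as claimed. The one point demanding care is the behaviour at $y=0$, where $\log(\sin y)$ and $\arctanh(\cos y)$ each diverge logarithmically; I would verify these singularities are integrable (and in fact largely cancel), so that interchanging summation and integration and splitting the integral termwise is legitimate.
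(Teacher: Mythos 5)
Your proposal is exactly the paper's proof with the omitted details supplied: substitute $x=\sin y$ in Lemma~\ref{lemma 5}, integrate over $[0,\pi/2]$, convert $\int_0^{\pi/2}\sin^{2n+2}y\,\mathrm{d}y$ via Wallis, and evaluate the right-hand side termwise. All of your integral evaluations are correct, including the key step $\int_0^{\pi/2}\arctanh(\cos y)\,\mathrm{d}y=-2\int_0^{\pi/4}\log\tan u\,\mathrm{d}u=2G$ and the observation that the logarithmic singularities of $-\log\sin y$ and $-\arctanh(\cos y)$ at $y=0$ cancel (their sum tends to $-\log 2$).

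The one genuine problem --- and it afflicts the paper's proof equally --- is the invocation of Lemma~\ref{lemma 5} at face value: that lemma is misstated. Its own derivation (divide $\sum_{n\ge 1}\binom{2n}{n}\frac{x^{2n+2}}{4^n(2n+1)(2n+2)}$ by $x$ and integrate) produces the denominator $4^n(2n+1)(2n+2)^2=4^{n+1}(2n+1)(n+1)^2$, i.e., $4^{n+1}$ rather than $16^n$. One can confirm this at $x=1$: with $16^n$ the left side is $\approx 0.01099$, while the right side equals $\log 2-\frac{9}{4}+\frac{\pi}{2}\approx 0.01394$; with $4^{n+1}$ the two sides agree. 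Carrying the corrected lemma through your (otherwise sound) computation, the Wallis factor gives $4^{n+1}\cdot 2^{2n+3}=32\cdot 16^n$, so the prefactor is $\frac{\pi}{32}$ rather than $\frac{\pi}{8}$ and the true identity is
$$\sum_{n=1}^{\infty}\binom{2n}{n}\binom{2n+2}{n+1}\frac{1}{16^n(n+1)^2(2n+1)}=32\log 2-34+\frac{96}{\pi}-\frac{64G}{\pi},$$
with $16^n$ in place of $64^n$ and the right-hand side multiplied by $4$. The theorem as printed fails numerically (left side $\approx 0.01633$, right side $\approx 0.01964$), so although your derivation of the stated equality from the stated lemma is internally consistent and matches the paper's intent, it does not establish a true statement; the fix is upstream in Lemma~\ref{lemma 5}.
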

\begin{proof}
From Lemma \ref{lemma 5}, Set $x=\sin y$, while integrating from $0$ to $\pi/2$ and using the identity; 
\begin{equation}
	\label{7}
\frac{\pi}{2^{2n+3}}\binom{2n+2}{n+1}=\int_0^{\pi/2}\sin^{2n+2}t\,\mathrm{d}t
\end{equation}
Note that the above equality follows directly from the walli's integral formula. The proof is straightforward from this end.
\end{proof}
\begin{theorem}
	If $n$ is a non-negative integer, then
	\begin{equation}
\sum_{n=1}^{\infty}\binom{2n}{n}\binom{2n+2}{n+1}\frac{1}{16^n(n+1)(2n+1)(2n+3)}=\frac{20}{\pi}+\frac{8G}{\pi}-\frac{26}{3}	
	\end{equation}
\end{theorem}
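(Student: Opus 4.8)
The plan is to follow the template established in the two preceding theorems: start from the closed form of Lemma~\ref{lemma 6}, make the substitution $x=\sin y$, and integrate over $y\in[0,\pi/2]$, converting the surviving power $\sin^{2n+2}y$ into the second central binomial coefficient $\binom{2n+2}{n+1}$ by means of the Wallis identity \eqref{7}. Concretely, after setting $x=\sin y$ the left-hand side of Lemma~\ref{lemma 6} becomes a series in $\sin^{2n+2}y$; integrating term by term and applying $\int_0^{\pi/2}\sin^{2n+2}y\,\mathrm{d}y=\frac{\pi}{2^{2n+3}}\binom{2n+2}{n+1}$ collapses the coefficient $\frac{1}{2^{2n+1}}$ together with $\frac{\pi}{2^{2n+3}}$ into $\frac{\pi}{16\cdot 16^{n}}$, so that the integrated left-hand side equals exactly $\frac{\pi}{16}$ times the target sum. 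The problem therefore reduces to evaluating the integral of the right-hand side of Lemma~\ref{lemma 6} after the same substitution.

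For that integral I would use $\sqrt{1-\sin^2 y}=\cos y$ and $\arcsin(\sin y)=y$ on $[0,\pi/2]$, turning the right-hand side into
\[
\frac{1}{12}\int_0^{\pi/2}\left(9\cos y+\frac{(6\sin^2 y+3)\,y}{\sin y}-2\sin^2 y-12\right)\mathrm{d}y.
\]
I would then split this into elementary pieces: $\int_0^{\pi/2}\cos y\,\mathrm{d}y=1$, $\int_0^{\pi/2}\sin^2 y\,\mathrm{d}y=\frac{\pi}{4}$, the constant term, and the mixed term, which after writing $\frac{6\sin^2 y+3}{\sin y}=6\sin y+\frac{3}{\sin y}$ breaks into $6\int_0^{\pi/2}y\sin y\,\mathrm{d}y$ (an integration by parts giving $6$) and $3\int_0^{\pi/2}\frac{y}{\sin y}\,\mathrm{d}y$.

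The single step where Catalan's constant enters, and hence the crux of the argument, is the evaluation
\[
\int_0^{\pi/2}\frac{y}{\sin y}\,\mathrm{d}y=2G,
\]
which is exactly the integral $\int_0^{\pi/2}\frac{x}{2\sin x}\,\mathrm{d}x=G$ recorded in the introduction (\cite{David}) and already exploited in the first theorem. Granting this, the integrated right-hand side evaluates to $\frac{1}{12}\bigl(15+6G-\tfrac{13\pi}{2}\bigr)$, and multiplying by $\frac{16}{\pi}$ to undo the factor coming from the left-hand side yields $\frac{20}{\pi}+\frac{8G}{\pi}-\frac{26}{3}$, as claimed. The only genuine care needed is in justifying the term-by-term integration (using absolute convergence of the series of nonnegative terms on $[0,\pi/2]$) and in bookkeeping the powers of two so that the prefactor is precisely $\frac{\pi}{16}$; everything else is a routine reduction to standard integrals.
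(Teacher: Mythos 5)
Your proposal is correct and follows exactly the route the paper intends: substitute $x=\sin y$ in Lemma~\ref{lemma 6}, integrate over $[0,\pi/2]$, convert $\int_0^{\pi/2}\sin^{2n+2}y\,\mathrm{d}y$ via \eqref{7}, and invoke $\int_0^{\pi/2}\frac{y}{\sin y}\,\mathrm{d}y=2G$; your bookkeeping (the $\frac{\pi}{16}$ prefactor and the value $\frac{1}{12}\bigl(15+6G-\frac{13\pi}{2}\bigr)$) checks out. In fact you supply more detail than the paper, whose proof is only a one-line sketch deferring to the pattern of Theorem 2.
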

\begin{proof}
From Lemma \ref{lemma 6}, following same pattern in Theorem 2 and using \eqref{7} with the famous identity from [\cite{David},Pg 2]; 
$$\int_0^{\pi/2}\frac{x}{2\sin x}\,\mathrm{d}x=G$$
The desired result follows immediately.
\end{proof}
\section{Some Interesting Series}
From Lemma \ref{lemma 1} to Lemma \ref{lemma 3}, we can generate some Lehmer Series [check, \cite{Lehmer}]
\begin{align}
	&\sum_{n=1}^{\infty}\frac{2^n}{n}\binom{2n}{n}^{-1}=\frac{\pi}{2}\\
	&\sum_{n=1}^{\infty}\frac{4^n}{n^2}\binom{2n}{n}^{-1}=\frac{\pi^2}{2}\\
	&\sum_{n=1}^{\infty}\frac{1}{n^2(n+2)}\binom{2n}{n}^{-1}=\frac{1}{72}(-117+42\pi\sqrt{3}-10\pi^2)\\
	&\sum_{n=1}^{\infty}\frac{1}{n(n+3/2)}\binom{2n}{n}^{-1}=\frac{100}{9}-2\sqrt{3}\pi\\
	&\sum_{n=1}^{\infty}\frac{1}{n(n+3/2)(n+5/2)}\binom{2n}{n}^{-1}=\frac{4}{225}(-1384+255\pi\sqrt{3})
\end{align}
We also have, 
\begin{align}
	\label{12}
   &\sum_{n=1}^{\infty}\frac{n^2}{16^n(2n-1)^2(2n+1)}\binom{2n}{n}^2 = \frac{G}{4\pi}+\frac{1}{8\pi}
\end{align}
It's easy to derive \eqref{12} from 
$$\sum_{n=1}^{\infty}\frac{2n^2x^{2n-1}}{4^n(2n-1)^2(2n+1)}\binom{2n}{n}=\frac{1}{8x^2}\left((2x^2+1)\arcsin x-x\sqrt{1-x^2}\right)$$
Now multiply by $x$ and set $\sin t=x$, then we integrate both sides from 0 to $\pi/2$. Then, \eqref{12} follows directly.
\par From Lemma \ref{lemma 3}, we noticed that the Guass Hyper-geometric function of the form \\ $_2F_1[2,2;9/2;1/4k]$ can be obtained. Observe, 
\begin{align*}
\sum_{n=1}^{\infty}\frac{1}{n(n+3/2)}\binom{2n}{n}^{-1}&=\frac{1}{105} \ _2F_1\left[2,2;\frac{9}{2};\frac{1}{4}\right]+\frac{210\sqrt{3}}{105}\pi-\frac{1120}{105}=\frac{100}{9}-2\sqrt{3}\pi
\end{align*}
\begin{align*}
\sum_{n=1}^{\infty}\frac{1}{2^{n+3/2}n(n+3/2)}\binom{2n}{n}^{-1}&=\frac{1}{840\sqrt{2}} \ _2F_1\left[2,2;\frac{9}{2};\frac{1}{8}\right]+\frac{11760\sqrt{7}}{840\sqrt{2}}\arcsin\frac{1}{2\sqrt{2}}-\frac{11200}{840\sqrt{2}}\\&=\frac{\sqrt{14}}{9}\left(7\sqrt{7}-51\arctan\frac{1}{\sqrt{7}}\right)\\
\sum_{n=1}^{\infty}\frac{1}{3^{n+3/2}n(n+3/2)}\binom{2n}{n}^{-1}&=\frac{1}{2835\sqrt{3}} \ _2F_1\left[2,2;\frac{9}{2};\frac{1}{12}\right]+\frac{41580\sqrt{11}}{2835\sqrt{3}}\arcsin\frac{1}{2\sqrt{3}}-\frac{40320}{2835\sqrt{3}}\\&=\frac{4}{27\sqrt{3}}\left(73-75\sqrt{11}\arctan\frac{1}{\sqrt{11}}\right)
\end{align*}
From the above, we can see that;
\begin{align}
	_2F_1\left[2,2;\frac{9}{2};\frac{1}{4}\right]&=1120-210\sqrt{3}\pi+105\left(\frac{100}{9}-2\pi\sqrt{3}\right)\approx 1.2947\\
	_2F_1\left[2,2;\frac{9}{2};\frac{1}{8}\right]&=11200-11760\sqrt{7}\arcsin\frac{1}{2\sqrt{2}}+\frac{560\sqrt{7}}{3}\left(7\sqrt{7}-51\arctan\frac{1}{\sqrt{7}}\right)\approx 1.12378\\
 	_2F_1\left[2,2;\frac{9}{2};\frac{1}{12}\right]&=40320-41580\sqrt{11}\arcsin\frac{1}{2\sqrt{3}}+420\left(73-75\sqrt{11}\arctan\frac{1}{\sqrt{11}}\right)\approx 1.0795	
\end{align}
\par In light of the aforementioned conclusions, a distinct pattern is discernible. Consequently, if $k$ is a natural number, the following conjecture is proposed to hold true.
$$_2F_1\left[2,2;\frac{9}{2};\frac{1}{4k}\right]=a_k+\sqrt{4k-1}\left(b_k\arcsin\left(\frac{1}{2\sqrt{k}}\right)+c_k\arctan\left(\frac{1}{\sqrt{4k-1}}\right)\right)$$
Where the general expression of $a_k,b_k$, and $ c_k$ remains open.

\end{document}